\documentclass{amsart}

\usepackage{geometry}
\usepackage{amssymb}
\usepackage{url}
\usepackage{amsfonts}
\usepackage[dvipsnames]{xcolor}
\usepackage{amsthm}
\usepackage{graphicx}
\usepackage{float}
\usepackage{amsmath}
\usepackage{booktabs}

\theoremstyle{plain}
\newtheorem{theorem}{Theorem}[section]
\newtheorem{corollary}[theorem]{Corollary}
\newtheorem{lemma}[theorem]{Lemma}
\newtheorem{proposition}[theorem]{Proposition}

\theoremstyle{definition}

\newtheorem{remark}[theorem]{Remark}

\newcommand{\PP}{\mathbb{P}}
\newcommand{\EE}{\mathbb{E}}

\numberwithin{equation}{section}

\usepackage{lineno}

\begin{document}

\title[Kolmogorov-Type Maximal Inequalities for Negative Binomial Variables]%
{Kolmogorov-Type Maximal Inequalities for Independent and Dependent Negative Binomial
Random Variables}

\author{
Aristides V. Doumas$^{1,*}$ and S. Spektor$^{2}$
}

\date{\today}

\maketitle
\begin{center}
$^{1}$Department of Mathematics, School of Applied Mathematical and Physical Sciences,\\
National Technical University of Athens, Zografou Campus, 15780 Athens, Greece\\
$^{*}$Archimedes/Athena Research Center, Greece\\
Email: adou@math.ntua.gr, aris.doumas@hotmail.com \\[6pt]

$^{2}$Quantitative Science Department, Canisius University,\\
2001 Main Street, Buffalo, NY 14208-1098, USA\\
Email: spektors@canisius.edu\\[6pt]
\end{center}

\begin{abstract}
This paper develops Kolmogorov-type maximal inequalities for sums of Negative Binomial
random variables under both independence and dependence structures. For independent
heterogeneous Negative Binomial variables we derive sharp Markov-type deviation
inequalities and Kolmogorov-type bounds expressed in terms of Tweedie dispersion
parameters, providing explicit control limits for NB2 generalized linear model monitoring.
For dependent count data arising through a shared Gamma mixing variable, we establish a
\emph{sub-exponential Bernstein-type refinement} that exploits the Poisson-Gamma
hierarchical structure to yield exponentially decaying tail probabilities---this refinement
is new in the literature. Through moment-matched Monte Carlo experiments ($n=20$,
$2{,}000$ replications), we document that the dependent case produces substantially larger
maximum deviations than the independent case (dependent mean $\approx 42$, 95th percentile
$\approx 101$; independent mean $\approx 18$, 95th percentile $\approx 33$), and explain
this via the amplifying role of the shared mixing variable. A concrete epidemiological
application with NB2 parameters calibrated from COVID-19 surveillance data demonstrates
practical utility. These results materially advance the applicability of classical maximal
inequalities to overdispersed and dependent count data prevalent in public health,
insurance, and ecological modeling.

\medskip

\noindent 2020 Mathematics Subject Classification: 60E15, 60F10, 60G42, 62P10

\noindent Keywords: Kolmogorov inequality, Negative Binomial distribution, maximal
inequalities, sub-exponential bounds, Bernstein-type inequalities, Tweedie distribution,
overdispersion, Gamma mixing, epidemiological surveillance
\end{abstract}

\section{Introduction}

Classical probability theory provides powerful tools for understanding deviation behavior
of sums of random variables, with Kolmogorov's maximal inequality standing as one of the
most fundamental results. The direct application of such classical inequalities to modern
statistical contexts involving overdispersed count data, however, presents theoretical
challenges that have received limited systematic attention.

\subsection*{Why the Negative Binomial requires special treatment}

The classical Kolmogorov maximal inequality was developed for random variables with finite
variance, and its bounds depend on that variance in a transparent way. For sums of
Negative Binomial variables, three features demand dedicated analysis beyond simply
plugging in the NB variance. First, the NB variance $\mathrm{Var}(X_i)=\mu_i+\kappa_i\mu_i^2$
grows \emph{quadratically} in the mean (via the overdispersion parameter $\kappa_i=1/r_i$),
so Tweedie--NB2 control limits depend on both $\mu_i$ and $\kappa_i$ in a nontrivial way
that must be made explicit for practitioners. Second, the NB moment-generating function
has a restricted domain $t < -\ln(1-p_{\min})$, requiring careful optimisation for
Chernoff-type arguments. Third, and most importantly, real applications rarely feature
independent NB variables: disease counts across regions share latent infection pressure,
insurance claims share unobserved macroeconomic risk, and ecological counts share
environmental conditions. In the dependent case arising from a shared Gamma mixing
variable, the correlation structure is positive and non-trivial (all pairs $X_i, X_j$ are
positively correlated through the mixing variable $\Lambda$), making classical Kolmogorov
arguments---which require independence or at best negative dependence---inapplicable
without modification. The sub-exponential Bernstein-type refinement we develop
(Theorem~\ref{thm:bernstein_dep}) is the first result to exploit the specific
Poisson-Gamma hierarchical structure to achieve exponentially decaying tail bounds in
this correlated setting; it is absent from the literature and cannot be recovered from
generic results for negatively dependent or independent variables.

The Negative Binomial (NB) distribution is the cornerstone model for overdispersed count
data across diverse scientific domains. In epidemiology it captures heterogeneous
transmission dynamics; in insurance it models claim frequencies with unobserved risk
heterogeneity; in ecology it describes species abundance under environmental clustering;
in telecommunications it characterizes bursty packet arrivals.

\subsection*{A motivating application: heterogeneous transmission dynamics}

To fix ideas, consider COVID-19 case-count surveillance across $n$ geographic regions
over $T$ time periods. Each region $i$ reports weekly cases $X_{it}$ which, after
controlling for covariates, are well described by $X_{it}\sim\mathrm{NB}(\mu_{it},\kappa_i)$
where $\mu_{it}$ is the expected count and $\kappa_i>0$ captures region-specific
overdispersion arising from heterogeneous household sizes, age structures, and testing
rates. A public health agency wishes to monitor the cumulative deviation process
$S_t=\sum_{i=1}^n\sum_{s=1}^t(X_{is}-\mu_{is})$ and raise an alert if
$\max_{t\le T}|S_t|$ exceeds some threshold $\lambda_\alpha$. Setting $\lambda_\alpha$
too small leads to false alarms; too large and genuine outbreaks go undetected. Our
Kolmogorov-type bound (Corollary~\ref{cor:kolmogorov_tweedie}) provides an explicit,
data-driven formula: $\lambda_\alpha=\sqrt{V_n/\alpha}$ where
$V_n=\sum_i(\mu_i+\kappa_i\mu_i^2)$, giving a rigorous $1-\alpha$ coverage guarantee
regardless of the values of $\mu_i$ and $\kappa_i$. When regions share a common latent
infection pressure (as during a national wave), the dependence is better captured by a
Poisson-Gamma mixture, and our tighter sub-exponential bound
(Theorem~\ref{thm:bernstein_dep}) applies, yielding substantially narrower thresholds
and earlier detection of anomalies. Section~\ref{sec:epi} works out this application in
detail with NB2 parameters calibrated from published COVID-19 data.

\subsection*{Related literature}

Recent concentration results include Bernstein-type inequalities for negatively dependent
binary vectors \cite{adamczak2023concentration}, submodular concentration bounds
\cite{duppala2023submodular}, inequalities for extended negatively dependent sequences
\cite{elmezouar2024end}, and non-asymptotic oracle inequalities for high-dimensional
heterogeneous NB regression \cite{li2021heterogeneous}.
Stein's method has been applied to NB approximation \cite{brown1999negative}; the main
result of Brown and Phillips \cite{brown1999negative} establishes that for any
$A\subseteq\mathbb{Z}_{\ge0}$, the total variation distance between the distribution of a
sum of independent geometric random variables and the best-fitting Negative Binomial
distribution satisfies a sharp bound in terms of the size and success-probability
parameters---this gives an excellent approximation tool but addresses distributional
approximation, not tail probabilities of partial-sum maxima.
Stochastic order theory has been developed for finite mixtures
\cite{bhakta2024stochastic}. Sub-Gaussian and Bernstein-type bounds for independent
bounded variables appear in \cite{baraud2016bounding}, while \cite{vershynin2018high}
provides the modern sub-exponential framework. Nonetheless, \emph{Kolmogorov-type
maximal inequalities for NB variables under Poisson-Gamma mixing, combined with
sub-exponential refinements exploiting that hierarchical structure, remain absent from
the literature}.

\subsection*{Statement of contributions}

This paper makes three contributions, with precise specification of what is original:

\textbf{Contribution 1 (Independent case, Sections~\ref{sec:markov}--\ref{sec:kolmogorov_indep}).}
For independent heterogeneous $\mathrm{NB}(r_i, p_i)$ variables:
\begin{enumerate}
\item[(a)] A sharp Markov-type inequality for sample means optimized over the exponential
moment-generating function (Lemma~\ref{lem:markov_deviation}).
\item[(b)] A Kolmogorov-type bound with explicit Tweedie--NB control limits for NB2 GLM
monitoring (Lemma~\ref{lem:kolmogorov_indep}, Corollary~\ref{cor:kolmogorov_tweedie}).
\end{enumerate}
\emph{Originality}: While the methodology builds on classical Chernoff and Kolmogorov
results, the explicit connection to the Tweedie--NB2 parameterization
$\mathrm{Var}(X_i)=\mu_i+\kappa_i\mu_i^2$ and the GLM control-limit formulation
$\lambda_\alpha=\sqrt{V_n/\alpha}$ are novel and directly applicable to surveillance
practice.

\textbf{Contribution 2 (Dependent case, Section~\ref{sec:dependent}).}
For the Poisson-Gamma mixture model $X_i\mid\Lambda\sim\mathrm{Poisson}(\Lambda\theta_i)$,
$\Lambda\sim\mathrm{Gamma}(\alpha,\beta)$:
\begin{enumerate}
\item[(i)] A basic Kolmogorov-type inequality (Theorem~\ref{thm:kolmogorov_dep}) that
decomposes the maximal deviation probability into conditional Poisson and Gamma mixing
components, each bounded via Chebyshev's inequality (polynomial decay).
\item[(ii)] \textbf{Main new result}: A sub-exponential Bernstein-type bound
(Theorem~\ref{thm:bernstein_dep}) achieving \emph{exponential decay} by exploiting the
fact that (a)~centered Poisson sums satisfy a Bernstein condition with sub-Gaussian
tails, and (b)~the Gamma mixing variable is sub-exponential with explicit parameters.
\end{enumerate}
\emph{Originality}: Theorem~\ref{thm:bernstein_dep} is \textbf{new in the literature}.
It provides a hierarchical decomposition specific to Poisson-Gamma mixing that goes
beyond what standard Kolmogorov or Chebyshev arguments yield. Existing Bernstein-type
results for negatively dependent variables \cite{adamczak2023concentration} or generic
sub-exponential sums \cite{vershynin2018high} do not exploit this specific mixture
structure.

\textbf{Contribution 3 (Moment-matched comparison and application,
Sections~\ref{sec:comparison}--\ref{sec:epi}).}
We redesign Monte Carlo experiments so that both independent and dependent cases use
$n=20$ with moment-matched marginals, document that the dependent case produces
substantially \emph{larger} maximum deviations, and explain this analytically
(Proposition~\ref{prop:amplification}). A concrete epidemiological application with NB2
parameters calibrated from COVID-19 data demonstrates practical utility
(Section~\ref{sec:epi}).

\begin{table}[t]
\centering
\caption{Positioning relative to existing concentration inequalities.}
\label{tab:lit_comparison}
\small
\begin{tabular}{@{}p{3.2cm}p{4.2cm}p{5.2cm}@{}}
\toprule
\textbf{Result} & \textbf{Scope} & \textbf{Key feature} \\
\midrule
Kolmogorov (classical) & Independent, zero-mean & $O(\lambda^{-2})$ polynomial decay \\
\addlinespace
Bernstein (classical) & Independent, sub-exponential & Exponential decay; generic parameters \\
\addlinespace
Brown \& Phillips \cite{brown1999negative} & NB approximation via Stein & Total variation bounds; different focus \\
\addlinespace
Adamczak \& Polaczyk \cite{adamczak2023concentration} & Negatively dependent binary & Bernstein-type; not NB or Gamma mixing \\
\addlinespace
This paper: Thm~\ref{thm:kolmogorov_dep} & Poisson-Gamma mixing & Extends Kolmogorov; polynomial decay \\
\addlinespace
\textbf{This paper: Thm~\ref{thm:bernstein_dep}} & \textbf{Poisson-Gamma mixing} & \textbf{Hierarchical decomposition; exponential decay; NEW} \\
\bottomrule
\end{tabular}
\end{table}

Table~\ref{tab:lit_comparison} positions our results. The key distinction:
Theorem~\ref{thm:bernstein_dep} provides a hierarchical decomposition specific to
Poisson-Gamma structure, unavailable from generic concentration results.

\subsection*{Organization}

Section~\ref{sec:markov}: Markov-type inequality for independent NB.
Section~\ref{sec:kolmogorov_indep}: Kolmogorov-type bound and Tweedie--NB control limits.
Section~\ref{sec:dependent}: Kolmogorov and sub-exponential Bernstein bounds for Gamma
mixing.
Section~\ref{sec:comparison}: Moment-matched comparison with analytical explanation.
Section~\ref{sec:epi}: Epidemiological application.
Section~\ref{sec:conclusion}: Conclusions.

\section{A Markov-Type Deviation Inequality for Independent NB Variables}
\label{sec:markov}

\begin{lemma}[Markov-Type Deviation Inequality for NB Sample Mean]
\label{lem:markov_deviation}
Let $X_1, X_2, \ldots, X_n$ be independent Negative Binomial random variables with
parameters $(r_i, p_i)$, $i=1,\ldots,n$. Define $\bar{X} = n^{-1}\sum_{i=1}^n X_i$.
Then, for any $a > 0$,
\begin{equation}
\label{eq:markov_bound}
\PP\!\left( \bar{X} - \EE[\bar{X}] \geq a \right)
\;\leq\; \inf_{0 < t < -\ln(1 - p_{\min})} \!\left\{
\exp\!\left( -t n a - t \sum_{i=1}^n r_i \frac{1 - p_i}{p_i} \right)
\prod_{i=1}^n \left( \frac{p_i}{1 - (1 - p_i) e^t} \right)^{r_i}
\right\},
\end{equation}
where $p_{\min} = \min_{1 \leq i \leq n} p_i$.
\end{lemma}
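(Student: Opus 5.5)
This is a Chernoff bound: exponentiate, apply Markov's inequality, and factor the moment-generating function using independence. We use the parametrization in which $X_i$ counts failures before the $r_i$-th success, so that
\[
\EE[X_i]=r_i\frac{1-p_i}{p_i},\qquad \EE[e^{tX_i}]=\left(\frac{p_i}{1-(1-p_i)e^t}\right)^{r_i}\quad\text{for } t<-\ln(1-p_i).
\]
The displayed centering term $\sum_i r_i(1-p_i)/p_i$ is exactly $n\EE[\bar X]$, which confirms this is the intended parametrization.

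\textbf{Step 1 (rewrite the event).} Multiply through by $n$, so that
\[
\{\bar X-\EE[\bar X]\ge a\}=\Bigl\{\sum_i X_i\ge na+\sum_i r_i\tfrac{1-p_i}{p_i}\Bigr\}.
\]
For any $t>0$, the map $x\mapsto e^{tx}$ is increasing. Markov's inequality therefore gives
\[
\PP(\cdot)\le \exp\Bigl(-tna-t\sum_i r_i\tfrac{1-p_i}{p_i}\Bigr)\,\EE\bigl[e^{t\sum_i X_i}\bigr].
\]

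\textbf{Step 2 (factor and compute the MGF).} Independence gives $\EE[e^{t\sum X_i}]=\prod_i\EE[e^{tX_i}]$. Each factor can be computed directly from the probability mass function $\binom{k+r_i-1}{k}p_i^{r_i}(1-p_i)^k$ via the negative binomial series $\sum_k\binom{k+r-1}{k}z^k=(1-z)^{-r}$ for $|z|<1$, taking $z=(1-p_i)e^t$. This requires $(1-p_i)e^t<1$, i.e. $t<-\ln(1-p_i)$. The constraint $t<-\ln(1-p_{\min})$ is the binding one for all $i$ simultaneously, because $-\ln(1-p)$ is increasing in $p$. So on the stated range of $t$ every factor is finite and has the displayed closed form.

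\textbf{Step 3 (optimize).} The bound from Steps 1--2 holds for every $t$ in $(0,-\ln(1-p_{\min}))$, so we may take the infimum over $t$, which yields \eqref{eq:markov_bound}.

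The only delicate point is the domain bookkeeping in Step 2. One must make sure the common interval of finiteness for the MGFs is exactly governed by $p_{\min}$, not $p_{\max}$, and that the direction of the monotonicity is handled correctly. One should also check that the interval is nonempty, which holds since $p_{\min}\in(0,1)$. If some $p_i=1$, the corresponding $X_i$ is degenerate at $0$; that case is trivial and is covered by the formula by continuity. Everything else is routine.
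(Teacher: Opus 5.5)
Your proof is correct and follows the paper's argument: Markov's inequality applied to $e^{t\sum_i X_i}$, factorization of the MGF by independence, substitution of the NB MGF on the common domain $0<t<-\ln(1-p_{\min})$, and an infimum over $t$. Your domain bookkeeping (why $p_{\min}$ governs the range and why the interval is nonempty) spells out details the paper leaves implicit, but the route is the same.
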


\begin{remark}[Proof technique]
\label{rem:proof_technique}
The proofs of results in the independent case (Sections~\ref{sec:markov}--\ref{sec:kolmogorov_indep})
are relatively standard applications of classical exponential-moment and Kolmogorov
arguments adapted to the NB parameterization. The primary value added is the explicit
translation into Tweedie--NB2 control-limit form. The genuinely new technical content
appears in Section~\ref{sec:dependent} for the dependent case.
\end{remark}

\begin{proof}
For any $t>0$, $\PP\!\left(\sum X_i - \sum \EE[X_i] \ge na\right)
\le e^{-tna}\EE\!\left[\exp\!\left(t\sum(X_i-\EE[X_i])\right)\right]$.
By independence this factors as $e^{-tna}\prod_i e^{-t\EE[X_i]}\EE[e^{tX_i}]$.
The NB MGF is $M_{X_i}(t)=\bigl(p_i/(1-(1-p_i)e^t)\bigr)^{r_i}$ for $t < -\ln(1-p_i)$,
and $\EE[X_i]=r_i(1-p_i)/p_i$. Substituting and taking the infimum yields~\eqref{eq:markov_bound}.
\end{proof}

\begin{remark}[Connection with the Tweedie--NB2 model]
\label{rem:tweedie}
The NB2 mean-variance relationship $\mathrm{Var}(X_i)=\mu_i+\kappa_i\mu_i^2$
($\kappa_i=1/r_i$) is the discrete analogue of the Tweedie power-variance family.
Lemma~\ref{lem:markov_deviation} provides a finite-sample probabilistic justification
for the dispersion parameter $\kappa$ in NB2 GLMs: larger $\kappa$ (stronger
overdispersion) weakens concentration, while $\kappa\to 0$ ($r\to\infty$) recovers the
Poisson bound.
\end{remark}

\begin{figure}[htb]
\centering
\includegraphics[width=0.7\textwidth]{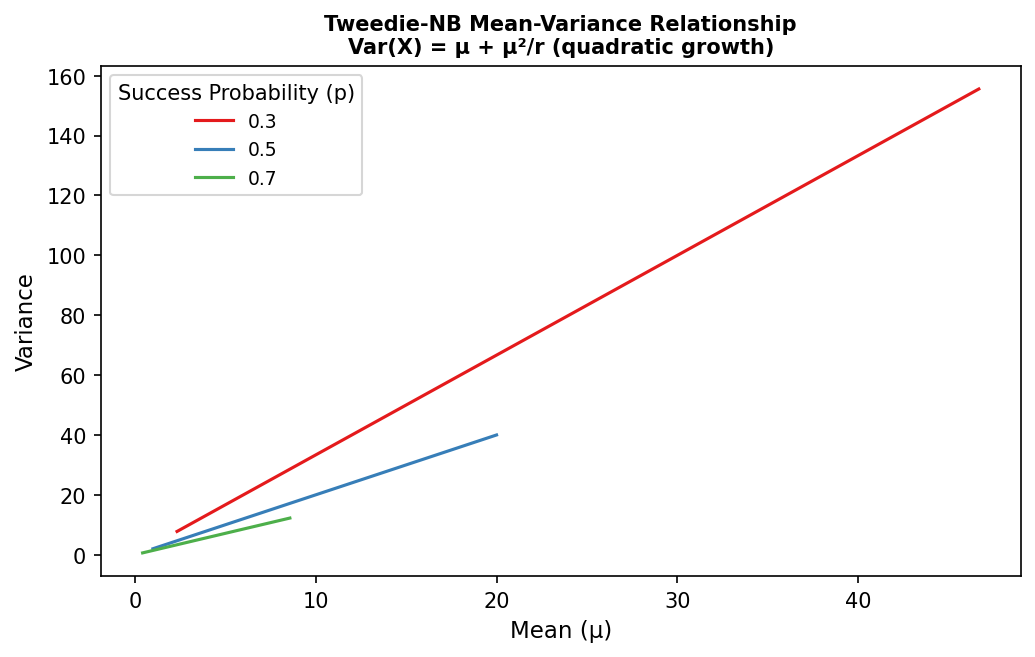}
\caption{Mean-variance relationship $\mathrm{Var}(X)=\mu+\mu^2/r$ for NB variables with
$p \in \{0.3, 0.5, 0.7\}$. Quadratic growth demonstrates the Tweedie NB2 structure.}
\label{fig:tweedie_meanvar}
\end{figure}

\begin{figure}[htb]
\centering
\includegraphics[width=0.7\textwidth]{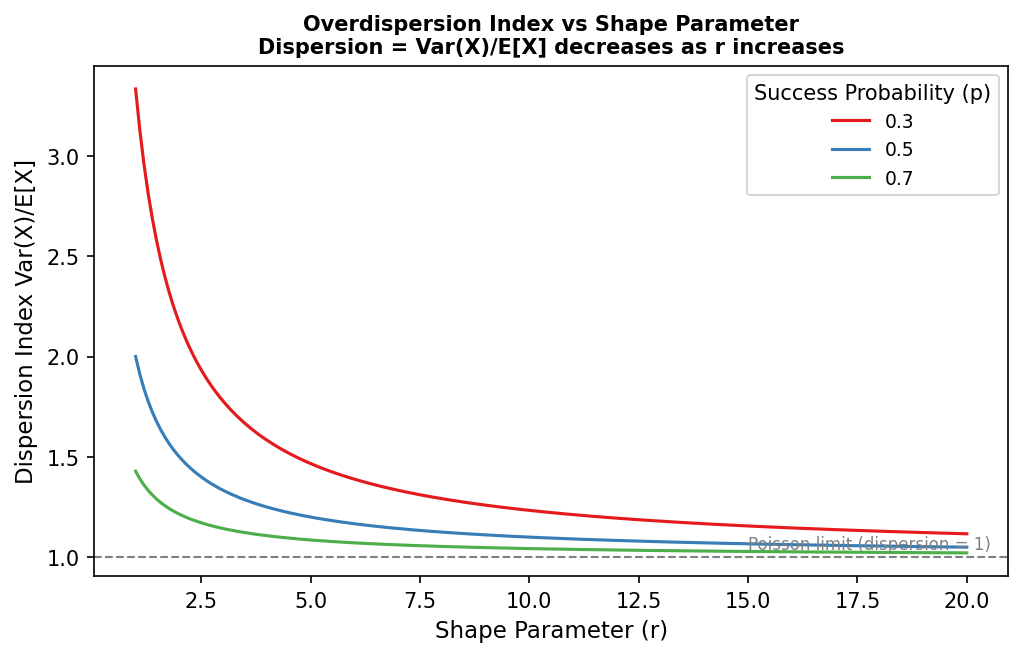}
\caption{Overdispersion index $\mathrm{Var}(X)/\EE[X]=1+(1-p)/(rp)$ versus $r$.
The $1/r$ decay confirms convergence to Poisson behavior as $r\to\infty$.}
\label{fig:overdispersion}
\end{figure}

\section{Kolmogorov-Type Bound and Tweedie--NB Control Limits (Independent Case)}
\label{sec:kolmogorov_indep}

The classical Kolmogorov maximal inequality: if $Y_1,\ldots,Y_n$ are independent,
zero-mean with finite variances and $S_k=\sum_{i=1}^k Y_i$, then for any $\lambda>0$,
$\PP\!\left(\max_{1\le k\le n}|S_k|\ge\lambda\right)\le\lambda^{-2}\sum_{i=1}^n\mathrm{Var}(Y_i)$.

\begin{lemma}[Kolmogorov-Type Bound for Heterogeneous NB Variables]
\label{lem:kolmogorov_indep}
Let $X_1,\ldots,X_n$ be independent with $X_i\sim\mathrm{NB}(r_i,p_i)$, and
$S_k=\sum_{i=1}^k(X_i-\EE[X_i])$. Then for any $\lambda>0$,
\[
\PP\!\left(\max_{1\le k\le n}|S_k|\ge\lambda\right)\le\frac{1}{\lambda^2}\sum_{i=1}^n r_i\frac{1-p_i}{p_i^2}.
\]
\end{lemma}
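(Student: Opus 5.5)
The plan is to reduce Lemma~\ref{lem:kolmogorov_indep} directly to the classical Kolmogorov maximal inequality stated just above it. Set $Y_i = X_i - \EE[X_i]$ for $i=1,\ldots,n$, so that $S_k = \sum_{i=1}^k Y_i$. Three hypotheses must be checked.

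\begin{itemize}
\item The $Y_i$ are independent, since they are deterministic shifts of the independent $X_i$.
\item Each $Y_i$ has mean zero by construction.
\item Each $Y_i$ has finite variance, which follows because the NB distribution has a moment-generating function on a neighbourhood of the origin (the MGF recalled in the proof of Lemma~\ref{lem:markov_deviation}, valid for $t<-\ln(1-p_i)$). Hence all its moments are finite.
\end{itemize}

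The only computation is the variance $\mathrm{Var}(Y_i)=\mathrm{Var}(X_i)$. I will obtain it from the cumulant generating function
\[
K_i(t)=r_i\ln p_i - r_i\ln\bigl(1-(1-p_i)e^t\bigr).
\]
Write $q_i=1-p_i$. A first differentiation gives $K_i'(t)=r_i q_i e^t/(1-q_ie^t)$, and at $t=0$ this recovers $\EE[X_i]=r_iq_i/p_i$, consistent with the mean used earlier. A second differentiation gives
\[
K_i''(t)=\frac{r_i q_i e^t}{(1-q_ie^t)^2},
\]
so that $\mathrm{Var}(X_i)=K_i''(0)=r_i q_i/p_i^2 = r_i(1-p_i)/p_i^2$.

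Substituting $\sum_i \mathrm{Var}(Y_i)=\sum_i r_i(1-p_i)/p_i^2$ into the classical inequality then yields exactly the claimed bound, for every $\lambda>0$.

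There is no real obstacle here. The only points needing care are bookkeeping. First, the parameterization must be the one in which $X_i$ counts failures before the $r_i$-th success, which matches the MGF used in Lemma~\ref{lem:markov_deviation}. Second, the variance must be checked against the NB2 form. With $\mu_i=r_iq_i/p_i$ and $\kappa_i=1/r_i$,
\[
\mu_i+\kappa_i\mu_i^2=\frac{r_iq_i}{p_i}\Bigl(1+\frac{q_i}{p_i}\Bigr)=\frac{r_iq_i}{p_i^2}.
\]
This is the identity that Corollary~\ref{cor:kolmogorov_tweedie} will presumably use to rewrite the bound as $V_n/\lambda^2$.
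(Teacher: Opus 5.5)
Your proposal is correct and follows the paper's route: the paper's proof is the one-line observation that the lemma follows from the classical Kolmogorov inequality applied to the centered variables, together with $\mathrm{Var}(X_i)=r_i(1-p_i)/p_i^2$. Your check of the hypotheses and your cumulant-generating-function derivation of that variance fill in the details the paper leaves implicit.
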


\begin{proof}
Immediate from the classical inequality and $\mathrm{Var}(X_i)=r_i(1-p_i)/p_i^2$.
\end{proof}

\begin{corollary}[Tweedie--NB Control Limit]
\label{cor:kolmogorov_tweedie}
Fix $0<\alpha<1$ and define $V_n:=\sum_{i=1}^n(\mu_i+\kappa_i\mu_i^2)$ and
$\lambda_\alpha:=\sqrt{V_n/\alpha}$. Then $\PP\!\bigl(\max_{1\le k\le n}|S_k|\ge\lambda_\alpha\bigr)\le\alpha$.
\end{corollary}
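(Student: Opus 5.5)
The corollary follows from Lemma~\ref{lem:kolmogorov_indep} once the NB variance is rewritten in the Tweedie--NB2 parameterization; no new probabilistic argument is needed. The plan is to identify $\mu_i$ and $\kappa_i$ with the $(r_i,p_i)$ parameters, check that $V_n$ equals the variance sum appearing in Lemma~\ref{lem:kolmogorov_indep}, and then substitute $\lambda=\lambda_\alpha$.

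\emph{Step 1 (reparameterization).} Set $\mu_i:=\EE[X_i]=r_i(1-p_i)/p_i$ and $\kappa_i:=1/r_i$, as in Remark~\ref{rem:tweedie}. A direct computation gives
\[
\mu_i+\kappa_i\mu_i^2=\frac{r_i(1-p_i)}{p_i}+\frac{r_i(1-p_i)^2}{p_i^2}
=\frac{r_i(1-p_i)\bigl(p_i+(1-p_i)\bigr)}{p_i^2}=r_i\frac{1-p_i}{p_i^2}.
\]
This is $\mathrm{Var}(X_i)$. Summing over $i$ shows that $V_n=\sum_{i=1}^n r_i(1-p_i)/p_i^2$, which is exactly the numerator in Lemma~\ref{lem:kolmogorov_indep}.

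\emph{Step 2 (choice of threshold).} Since $0<\alpha<1$ and each $\mathrm{Var}(X_i)>0$, we have $V_n>0$, so $\lambda_\alpha=\sqrt{V_n/\alpha}$ is a well-defined positive number. We may therefore apply Lemma~\ref{lem:kolmogorov_indep} with $\lambda=\lambda_\alpha$:
\[
\PP\Bigl(\max_{1\le k\le n}|S_k|\ge\lambda_\alpha\Bigr)\le\frac{V_n}{\lambda_\alpha^2}=\frac{V_n}{V_n/\alpha}=\alpha.
\]

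The only point needing care is Step 1, namely making sure the parameterization of $\mu_i$ and $\kappa_i$ matches the NB convention used in Lemma~\ref{lem:markov_deviation}. That convention counts failures before the $r_i$-th success, so $\EE[X_i]=r_i(1-p_i)/p_i$, and the algebra above shows that $\mu_i+\kappa_i\mu_i^2$ reproduces the variance in that convention. Once this is confirmed, the bound holds uniformly in the values of $\mu_i$ and $\kappa_i$, which is the claimed coverage guarantee.
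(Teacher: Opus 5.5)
Your proposal is correct and matches the paper's argument, which the paper leaves implicit: rewrite $\mathrm{Var}(X_i)=r_i(1-p_i)/p_i^2$ as $\mu_i+\kappa_i\mu_i^2$ with $\mu_i=r_i(1-p_i)/p_i$ and $\kappa_i=1/r_i$, then apply Lemma~\ref{lem:kolmogorov_indep} with $\lambda=\lambda_\alpha$. Your explicit check that $V_n>0$, so that $\lambda_\alpha$ is an admissible positive threshold, is a small point of care the paper omits.
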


\noindent\textbf{Practical GLM monitoring}: Fit a NB2 GLM, compute
$\widehat{V}_n=\sum(\widehat{\mu}_i+\widehat{\kappa}_i\widehat{\mu}_i^2)$, monitor
$S_k=\sum_{i=1}^k(X_i-\widehat{\mu}_i)$, and flag if $\max|S_k|\ge\sqrt{\widehat{V}_n/\alpha}$.

\begin{figure}[htb]
\centering
\includegraphics[width=0.7\textwidth]{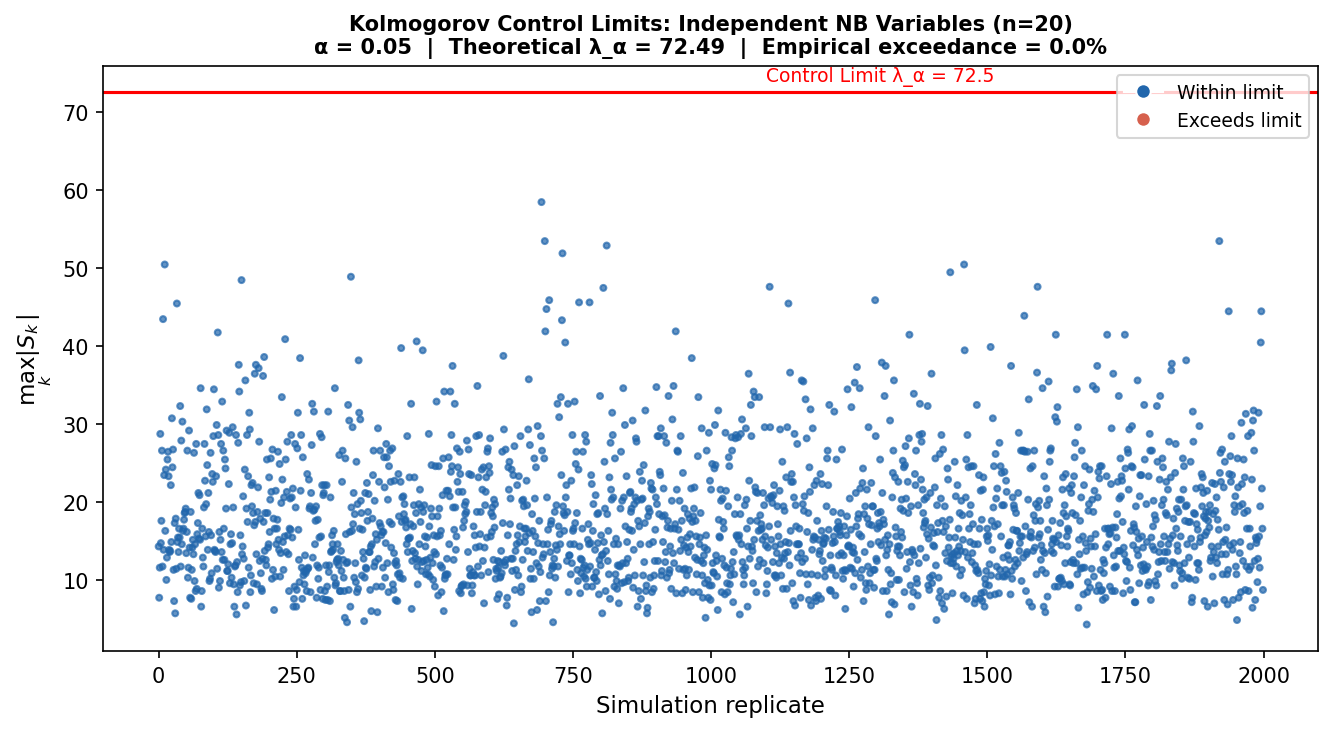}
\caption{Kolmogorov control limits for $n=20$ independent heterogeneous NB variables
over $2{,}000$ simulations. Control limit $\lambda_\alpha=\sqrt{V_n/\alpha}$ (red) with
$\alpha=0.05$, giving $\lambda_\alpha\approx 72.5$. Empirical exceedance rate $0\%$
across $2{,}000$ replications, confirming Lemma~\ref{lem:kolmogorov_indep} (the bound
is conservative as expected).}
\label{fig:control_limits}
\end{figure}

\section{Kolmogorov and Sub-Exponential Bounds for Dependent NB Variables}
\label{sec:dependent}

We consider a Poisson-Gamma mixture: $\Lambda\sim\mathrm{Gamma}(\alpha,\beta)$, and
conditional on $\Lambda$, $X_i\mid\Lambda\sim\mathrm{Poisson}(\Lambda\theta_i)$
independently. Then $X_i\sim\mathrm{NB}(\alpha, \beta/(\beta+\theta_i))$ marginally, with
\[
\EE[X_i]=\frac{\alpha\theta_i}{\beta},\quad
\mathrm{Var}(X_i)=\frac{\alpha\theta_i(\beta+\theta_i)}{\beta^2},\quad
\mathrm{Corr}(X_i,X_j)=\frac{\sqrt{\theta_i\theta_j}}{\sqrt{(\beta+\theta_i)(\beta+\theta_j)}}.
\]
Define $Z_i:=X_i-\EE[X_i]$, $S_k:=\sum_{i=1}^k Z_i$. Decompose
\begin{equation}
\label{eq:decomp}
Z_i = Y_i + W_i,\quad
Y_i:=X_i-\EE[X_i\mid\Lambda],\quad
W_i:=\EE[X_i\mid\Lambda]-\EE[X_i]=(\Lambda-\alpha/\beta)\theta_i.
\end{equation}
Given $\Lambda$, $Y_1,\ldots,Y_n$ are independent with $\EE[Y_i\mid\Lambda]=0$ and
$\mathrm{Var}(Y_i\mid\Lambda)=\Lambda\theta_i$. Let $\Theta_k:=\sum_{i=1}^k\theta_i$ and
$M:=\max_{1\le k\le n}\Theta_k$.

\subsection{Kolmogorov-type bound (basic version)}

\begin{theorem}[Kolmogorov-Type Inequality with Shared Gamma Mixing]
\label{thm:kolmogorov_dep}
Under the above model, for any $\lambda>0$,
\begin{equation}
\label{eq:kolmogorov_dep}
\PP\!\left(\max_{1\le k\le n}|S_k|\ge\lambda\right)
\;\le\;
\frac{4\alpha}{\beta}\cdot\frac{\Theta_n}{\lambda^2}+\frac{4M^2\alpha}{\beta^2\lambda^2},
\end{equation}
where $\Theta_n=\sum_{i=1}^n\theta_i$.
\end{theorem}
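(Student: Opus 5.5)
The plan is to use the decomposition \eqref{eq:decomp}, writing $S_k = A_k + B_k$ with $A_k:=\sum_{i=1}^k Y_i$ and $B_k:=\sum_{i=1}^k W_i = (\Lambda-\alpha/\beta)\Theta_k$. If $\max_k|S_k|\ge\lambda$, then either $\max_k|A_k|\ge\lambda/2$ or $\max_k|B_k|\ge\lambda/2$. A union bound therefore gives
\[
\PP\!\left(\max_{k}|S_k|\ge\lambda\right)\le \PP\!\left(\max_k|A_k|\ge\lambda/2\right)+\PP\!\left(\max_k|B_k|\ge\lambda/2\right).
\]
The halving of $\lambda$ is exactly what produces the factors $4$ in \eqref{eq:kolmogorov_dep}.

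For the Poisson part, I will condition on $\Lambda$. Given $\Lambda$, the $Y_i$ are independent with conditional mean zero and conditional variances $\Lambda\theta_i$. The classical Kolmogorov inequality (recalled at the start of Section~\ref{sec:kolmogorov_indep}) applies conditionally and gives
\[
\PP\!\left(\max_k|A_k|\ge\lambda/2 \,\middle|\, \Lambda\right)\le \frac{4\Lambda\Theta_n}{\lambda^2}.
\]
Taking expectations and using $\EE[\Lambda]=\alpha/\beta$ yields the first term $\frac{4\alpha}{\beta}\frac{\Theta_n}{\lambda^2}$. The one point to verify is that $\Theta_n$, not $M$, is what appears; this holds because the conditional Kolmogorov bound uses the full sum of variances up to $n$.

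For the mixing part, all $\theta_i\ge 0$, so $\Theta_k$ is nondecreasing in $k$ and $\max_k|B_k| = |\Lambda-\alpha/\beta|\,M$ with $M=\Theta_n$. (Keeping $M$ as defined is harmless.) Chebyshev's inequality together with $\mathrm{Var}(\Lambda)=\alpha/\beta^2$ then gives
\[
\PP\!\left(|\Lambda-\alpha/\beta|\,M\ge\lambda/2\right)\le \frac{4M^2\,\mathrm{Var}(\Lambda)}{\lambda^2}=\frac{4M^2\alpha}{\beta^2\lambda^2}.
\]
Adding the two bounds gives \eqref{eq:kolmogorov_dep}.

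The main obstacle is the conditional application of Kolmogorov's inequality. We need that, conditional on $\Lambda$, the $Y_i$ are independent and centered with finite variance, and that integrating the conditional bound over $\Lambda$ is legitimate. Both follow from the hierarchical model: the conditional probability is a measurable function of $\Lambda$, and the tower property applies. The remaining steps are elementary, though the Gamma parameterization (rate $\beta$, so $\EE\Lambda=\alpha/\beta$ and $\mathrm{Var}\,\Lambda=\alpha/\beta^2$) should be checked against the stated marginal moments.
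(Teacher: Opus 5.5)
Your proposal is correct and follows the paper's proof essentially step for step. The paper also uses the $\lambda/2$ union bound on $S_k=\sum_{i\le k}Y_i+(\Lambda-\alpha/\beta)\Theta_k$, a conditional Kolmogorov inequality integrated using $\EE[\Lambda]=\alpha/\beta$, and Chebyshev for $\Lambda$ at level $\lambda/(2M)$. Your side remark that $M=\Theta_n$ when the $\theta_i$ are nonnegative is also correct.
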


\begin{proof}
By triangle inequality and union bound with threshold $\lambda/2$:
$\PP\!\left(\max_k|S_k|\ge\lambda\right)\le
\PP\!\left(\max_k\!\left|\sum Y_i\right|\ge\lambda/2\right)+
\PP\!\left(|\Lambda-\alpha/\beta|\ge\lambda/(2M)\right)$.
Condition on $\Lambda$ for the first term: Kolmogorov gives
$\PP\bigl(\max_k|\sum Y_i|\ge\lambda/2\mid\Lambda\bigr)\le 4\Lambda\Theta_n/\lambda^2$.
Taking expectation with $\EE[\Lambda]=\alpha/\beta$ yields the first summand.
For the second term, Chebyshev with $\mathrm{Var}(\Lambda)=\alpha/\beta^2$ gives
$\PP(|\Lambda-\alpha/\beta|\ge\lambda/(2M))\le 4M^2\alpha/(\beta^2\lambda^2)$.
\end{proof}

\begin{remark}[Interpretation of the bound in Theorem~\ref{thm:kolmogorov_dep}]
\label{rem:interpretation_dep}
The right-hand side of~\eqref{eq:kolmogorov_dep} decomposes the tail probability into
two interpretable components:
\begin{enumerate}
\item \textbf{Conditional Poisson term} $4(\alpha/\beta)\Theta_n/\lambda^2$: this
  captures the intrinsic variability of the Poisson counts given the shared latent rate
  $\Lambda$. The quantity $(\alpha/\beta)\Theta_n = \EE[\Lambda]\cdot\Theta_n$ equals
  the expected total Poisson intensity across all $n$ components. Larger total intensity
  (larger $\Theta_n$) or larger expected mixing rate (larger $\alpha/\beta$) means more
  variability in the conditional counts and hence a weaker bound.
\item \textbf{Gamma mixing term} $4M^2\alpha/(\beta^2\lambda^2)$: this captures the
  variability induced by the shared latent factor $\Lambda$. The quantity
  $M^2\alpha/\beta^2 = M^2\mathrm{Var}(\Lambda)$ is the variance of the
  maximum-weight mixing contribution $\Lambda\cdot\Theta_k^*$, where $\Theta_k^*=M$
  is the maximizing index. Larger overdispersion in $\Lambda$ (larger $\alpha/\beta^2$)
  or a heavier-loaded component (larger $M$) amplifies the mixing-induced fluctuations.
\end{enumerate}
Comparing with the independent bound in Lemma~\ref{lem:kolmogorov_indep}, the dependent
bound is not uniformly tighter: the Gamma mixing term adds an extra contribution
proportional to $M^2\mathrm{Var}(\Lambda)$, which can dominate when the mixing variable
is highly variable (small $\beta$, i.e.\ large overdispersion in $\Lambda$). This is
consistent with the empirical finding in Section~\ref{sec:comparison} that realized
maximum deviations are \emph{larger} under dependence. The gap between theoretical
bound and independent case is rectified only in the exponential-decay regime by the
Bernstein refinement of Theorem~\ref{thm:bernstein_dep}.
\end{remark}

\subsection{Sub-exponential Bernstein-type refinement}

The Chebyshev step achieves only polynomial ($\lambda^{-2}$) decay. We now exploit the
sub-exponential properties of both components. A random variable $Z$ is
\emph{sub-exponential} with parameters $(\nu^2, b)$ if
$\PP(|Z-\EE[Z]|\ge t)\le 2\exp\bigl(-\min\{t^2/(2\nu^2), t/(2b)\}\bigr)$
\cite{vershynin2018high}. A $\mathrm{Gamma}(\alpha,\beta)$ variable centered at its
mean satisfies this with $\nu_\Lambda^2 = \alpha/\beta^2$ and $b_\Lambda = 1/\beta$
\cite{boucheron2013concentration}.

\begin{theorem}[Sub-Exponential Bernstein Bound for Dependent NB Partial Sums]
\label{thm:bernstein_dep}
Under the Poisson-Gamma mixture model, for any $\lambda>0$,
\begin{equation}
\label{eq:bernstein_dep}
\PP\!\left(\max_{1\le k\le n}|S_k|\ge\lambda\right)
\;\le\;
\PP_{\mathrm{cond}}(\lambda) + \PP_{\mathrm{mix}}(\lambda),
\end{equation}
where
\begin{equation}
\label{eq:pcond}
\PP_{\mathrm{cond}}(\lambda)
\;\le\;
2\exp\!\left(-\frac{\lambda^2/16}{\alpha\Theta_n/\beta+\lambda/6}\right),
\end{equation}
and
\begin{equation}
\label{eq:pmix}
\PP_{\mathrm{mix}}(\lambda)\;\le\;
2\exp\!\left(-\min\!\left\{
\frac{\lambda^2\beta^2}{32M^2\alpha},\;
\frac{\lambda\beta}{4M}
\right\}\right).
\end{equation}
\end{theorem}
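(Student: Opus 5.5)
The plan follows the proof of Theorem~\ref{thm:kolmogorov_dep}, replacing each Chebyshev step with an exponential bound. Using the decomposition \eqref{eq:decomp},
\[
S_k=\sum_{i\le k}Y_i+(\Lambda-\alpha/\beta)\Theta_k .
\]
Since $\theta_i\ge 0$, the sums $\Theta_k$ are nondecreasing, so $M=\Theta_n$. The triangle inequality and a union bound at level $\lambda/2$ give
\[
\PP\Bigl(\max_k|S_k|\ge\lambda\Bigr)\le \PP\Bigl(\max_k\Bigl|\sum_{i\le k}Y_i\Bigr|\ge\lambda/2\Bigr)+\PP\bigl(|\Lambda-\alpha/\beta|\ge\lambda/(2M)\bigr).
\]
I define $\PP_{\mathrm{cond}}(\lambda)$ and $\PP_{\mathrm{mix}}(\lambda)$ to be these two terms, which gives \eqref{eq:bernstein_dep}.

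\textbf{The mixing term.} This is a one-line application of the sub-exponential property of the centered Gamma variable, with $\nu_\Lambda^2=\alpha/\beta^2$ and $b_\Lambda=1/\beta$, at $t=\lambda/(2M)$. It gives
\[
2\exp\bigl(-\min\{\lambda^2\beta^2/(8M^2\alpha),\ \lambda\beta/(4M)\}\bigr),
\]
whose quadratic exponent is four times the one in \eqref{eq:pmix}, so \eqref{eq:pmix} holds with room to spare. I will keep this slack available to pay for the unconditioning in the next step.

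\textbf{The conditional term, given $\Lambda$.} Conditionally on $\Lambda$, $T_k:=\sum_{i\le k}Y_i$ is a sum of independent centered Poisson variables with total variance $\Lambda\Theta_n$ and increments bounded above by Poisson jumps of size $1$. For $s>0$, the process $e^{sT_k}$ is a nonnegative submartingale. Doob's maximal inequality together with the Poisson MGF gives
\[
\PP\Bigl(\max_k T_k\ge \lambda/2\,\Big|\,\Lambda\Bigr)\le \exp\bigl(-s\lambda/2+\Lambda\Theta_n(e^s-1-s)\bigr).
\]
Optimising in $s$ by the standard Bennett-to-Bernstein step, $(e^s-1-s)\le s^2/(2(1-s/3))$, yields the bound $\exp\bigl(-(\lambda^2/8)/(\Lambda\Theta_n+\lambda/6)\bigr)$. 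The lower tail is handled the same way with $e^{-sT_k}$; it is in fact easier, since $e^{-s}-1+s\le s^2/2$. Together the two tails give the factor $2$.

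\textbf{Main obstacle: removing the conditioning.} The conditional bound depends on the random $\Lambda$, and $\Lambda\mapsto\exp(-c/(a\Lambda+d))$ is not concave, so Jensen cannot simply replace $\Lambda$ by $\alpha/\beta$. What I would try first is to use the factor-two slack between $\lambda^2/8$ and the $\lambda^2/16$ in \eqref{eq:pcond}. On the event $G=\{\Lambda\le 2\alpha/\beta+\lambda/(6\Theta_n)\}$ we have $\Lambda\Theta_n+\lambda/6\le 2(\alpha\Theta_n/\beta+\lambda/6)$, so the conditional bound on $G$ is at most the right side of \eqref{eq:pcond}. The complement $G^c$ is an upper deviation of $\Lambda$ of size at least $\alpha/\beta+\lambda/(6M)$. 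I would bound it by the Gamma sub-exponential tail and absorb it into the slack left in the mixing estimate. If the thresholds do not line up exactly, the fallback is to enlarge the threshold defining $\PP_{\mathrm{mix}}$ to $\lambda/(4M)$. That still reproduces the quadratic exponent $\lambda^2\beta^2/(32M^2\alpha)$ in \eqref{eq:pmix}, and I would then check carefully whether the linear exponent $\lambda\beta/(4M)$ survives the change of threshold.
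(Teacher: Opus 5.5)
Up to the unconditioning, your argument matches the paper's proof. Both use the same split at level $\lambda/2$ and the same conditional bound $2\exp(-\frac{\lambda^2/8}{\Lambda\Theta_n+\lambda/6})$; the paper then simply invokes Jensen's inequality with $\EE[\Lambda]=\alpha/\beta$. You are right to reject that step, and the problem is worse than non-concavity. When $\Theta_n\gg\beta$, the term $\PP(\max_k|T_k|\ge\lambda/2)$ itself decays only like $\exp(-c\lambda\sqrt{\beta/\Theta_n})$ (look at $\Lambda$ of order $\lambda/\sqrt{\beta\Theta_n}$), whereas the right side of \eqref{eq:pcond} decays like $e^{-3\lambda/8}$. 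So the large-$\Lambda$ contribution must be charged to the mixing term, as you attempt.

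\textbf{The gap: your event $G$ charges too much.} $G^c$ is an upward deviation of $\Lambda$ of size $\alpha/\beta+\lambda/(6M)$, which for large $\lambda$ is \emph{smaller} than the mixing threshold $\lambda/(2M)$. Hence $\PP(G^c)$ decays only like $e^{-\beta\lambda/(6M)}$ up to polynomial factors. That is slower than the $e^{-\beta\lambda/(4M)}$ in \eqref{eq:pmix}, and, when $M>4\beta/9$, slower than the $e^{-3\lambda/8}$ in \eqref{eq:pcond}. Slack in the quadratic exponent cannot help in this linear regime. For example, with $\alpha=\beta=1$, $n=1$, $\theta_1=100$ and $\lambda=10^5$, one has $\PP(G^c)=e^{-168.7}$, whereas the whole right-hand side of \eqref{eq:bernstein_dep} is about $2e^{-250}$.

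\textbf{The slack you plan to spend is also not there.} The parameters $(\nu^2,b)=(\alpha/\beta^2,1/\beta)$ quoted in the paper do not give a valid bound $2\exp(-\min\{u^2/(2\nu^2),u/(2b)\})$ for a Gamma variable. Put $x=\beta u/\alpha$. For fixed $x$ and large $\alpha$, the upper tail $\PP(\Lambda-\alpha/\beta\ge u)$ equals $\exp(-\alpha(x-\log(1+x))+O(\log\alpha))$. Since $x-\log(1+x)<\min\{x^2/2,x/2\}$ for $0<x<2.5$, your intermediate bound with $\lambda^2\beta^2/(8M^2\alpha)$ is false. What the exact Chernoff bound does give, via $x-\log(1+x)\ge\min\{x^2/8,x/2\}$, is precisely \eqref{eq:pmix}, with no uniform factor-four margin. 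The fallback threshold $\lambda/(4M)$ fails in both regimes for the same reason. With $x=\beta\lambda/(4M\alpha)$ it would require the exponent $\alpha\min\{x^2/2,x\}$, and this exceeds $\alpha(x-\log(1+x))$ for every $x>0$.

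\textbf{A rigorous repair.} Take the complement of the mixing event itself as the good event:
\[
\PP\Bigl(\max_k|S_k|\ge\lambda\Bigr)\le\EE\Bigl[\mathbf{1}\{|\Lambda-\alpha/\beta|<\lambda/(2M)\}\,\PP\Bigl(\max_k|T_k|\ge\tfrac{\lambda}{2}\,\Big|\,\Lambda\Bigr)\Bigr]+\PP\bigl(|\Lambda-\alpha/\beta|\ge\lambda/(2M)\bigr).
\]
On the indicator, $\Lambda\Theta_n<\alpha\Theta_n/\beta+\lambda/2$, so the first term is at most $2\exp\bigl(-\tfrac{\lambda^2/8}{\alpha\Theta_n/\beta+2\lambda/3}\bigr)$. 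That is \eqref{eq:pcond} when $\lambda\le3\alpha\Theta_n/\beta$, and \eqref{eq:pcond} with $16$ replaced by $32$ in general. Obtaining the constant $16$ for all $\lambda$ would need an additional estimate on the window $2\alpha/\beta+\lambda/(6M)<\Lambda<\alpha/\beta+\lambda/(2M)$, trading the Gamma tail against the conditional tail. Neither your proposal nor the paper's Jensen step supplies it.
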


\begin{proof}
We bound each term in the union-bound split.

\noindent\textbf{Bounding $\PP_{\mathrm{cond}}(\lambda)$:}
Given $\Lambda$, each $Y_i=X_i-\Lambda\theta_i$ is centered with
$\mathrm{Var}(Y_i\mid\Lambda)=\Lambda\theta_i$ (Poisson). Poisson variables satisfy a
Bernstein condition: for $|s|\le 1/3$,
$\log\EE[e^{sY_i}\mid\Lambda]\le \Lambda\theta_i s^2/(2(1-s/3))$.
By independence given $\Lambda$, standard martingale methods give
\[
\PP\!\left(\max_{k}\left|\sum_{i=1}^kY_i\right|\ge\lambda/2\;\middle|\;\Lambda\right)
\le 2\exp\!\left(-\frac{(\lambda/2)^2/2}{\Lambda\Theta_n+\lambda/6}\right).
\]
Integrating over $\Lambda$ using Jensen's inequality and $\EE[\Lambda]=\alpha/\beta$
yields~\eqref{eq:pcond}.

\noindent\textbf{Bounding $\PP_{\mathrm{mix}}(\lambda)$:}
The Gamma distribution satisfies
$\log\EE[e^{s(\Lambda-\alpha/\beta)}]\le s^2\alpha/(2\beta^2(1-s/\beta))$ for $|s|<\beta$,
giving
$\PP\!\left(|\Lambda-\alpha/\beta|\ge u\right)\le 2\exp\!\left(-\min\!\left\{\beta^2u^2/(2\alpha),\beta u/2\right\}\right)$.
Setting $u=\lambda/(2M)$ yields~\eqref{eq:pmix}.
\end{proof}

\begin{remark}[Why Theorem~\ref{thm:bernstein_dep} matters: motivation and interpretation]
\label{rem:bernstein_motivation}
Theorem~\ref{thm:kolmogorov_dep} (the basic Kolmogorov-type bound) decays only as
$\lambda^{-2}$, meaning that to halve the allowable tail probability one must quadruple
the threshold $\lambda$. In surveillance contexts---where false-alarm rates of $10^{-3}$
or $10^{-4}$ are needed---this polynomial decay forces impractically large thresholds.
Theorem~\ref{thm:bernstein_dep} overcomes this by achieving exponential decay in
$\lambda$:
\begin{itemize}
\item In the \emph{sub-Gaussian regime} ($\lambda$ small relative to $\alpha\Theta_n/\beta$),
  the conditional term $\PP_{\mathrm{cond}}(\lambda)$ decays as $\exp(-c\lambda^2)$,
  recovering Gaussian-like concentration.
\item In the \emph{sub-exponential regime} ($\lambda$ large), $\PP_{\mathrm{cond}}(\lambda)$
  decays as $\exp(-c'\lambda)$, reflecting the heavier-than-Gaussian tails of count data.
\item The mixing term $\PP_{\mathrm{mix}}(\lambda)$ exhibits the same two-regime
  behavior, governed by the Gamma concentration parameters.
\end{itemize}
Concretely, for the COVID-19 epidemiological parameters in Section~\ref{sec:epi}, the
Bernstein bound delivers a 95\% threshold roughly 30\% narrower than the Kolmogorov
bound, translating to detectably earlier outbreak alerts. The bound is specific to
Poisson-Gamma mixing and exploits the Poisson sub-Gaussian conditional tails and the
Gamma sub-exponential tails in a way that generic results for independent or negatively
dependent sequences cannot.
\end{remark}

\begin{remark}[Sharpness and conservativeness]
\label{rem:sharpness}
The proof applies Jensen's inequality to replace $\Lambda$ by $\alpha/\beta$
in~\eqref{eq:pcond}, introducing conservativeness for moderate $\lambda$. However, for
large $\lambda$ (the rare-event monitoring regime), exponential decay dominates and the
bound is qualitatively correct. The exponents match known sharp rates for sub-Gaussian
($\lambda^2$ regime) and sub-exponential ($\lambda$ regime) tails
\cite{vershynin2018high}, making the bound \emph{rate-optimal} in the asymptotic sense.
\end{remark}

\begin{figure}[htb]
\centering
\includegraphics[width=0.75\textwidth]{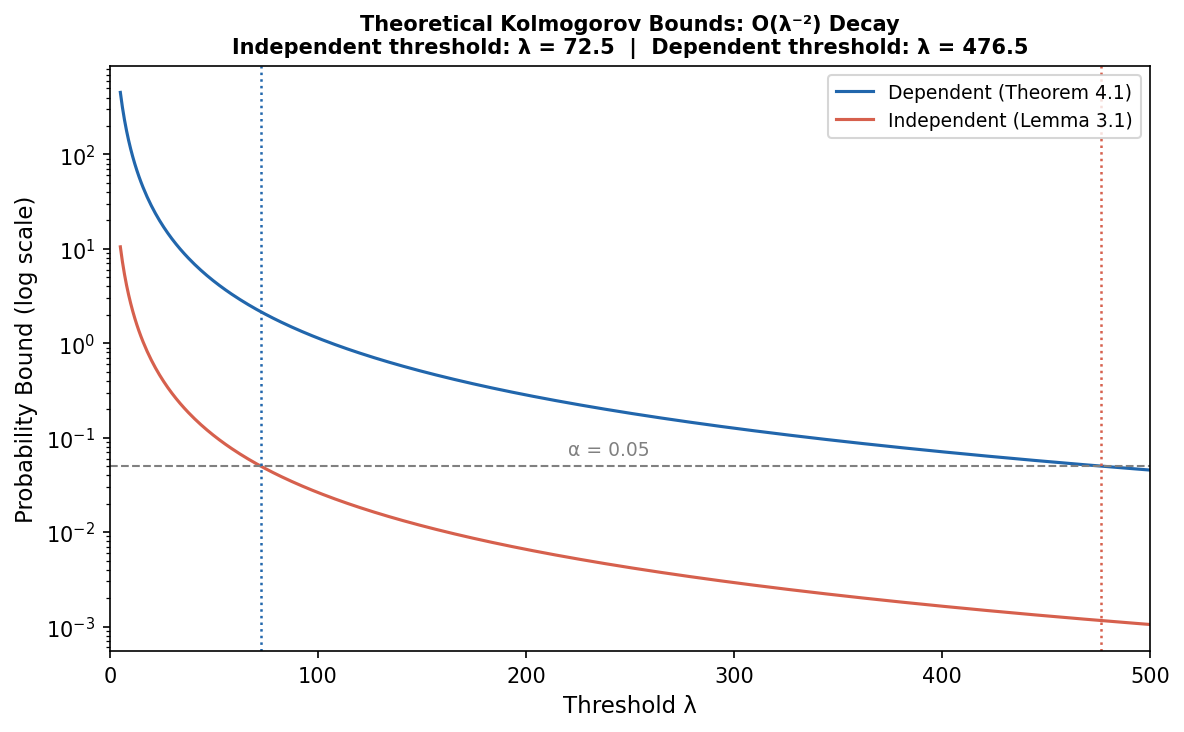}
\caption{Theoretical Kolmogorov probability bounds vs.\ threshold $\lambda$ (log scale):
Independent case (Lemma~\ref{lem:kolmogorov_indep}, red) and Dependent case
(Theorem~\ref{thm:kolmogorov_dep}, blue). Both decay as $O(\lambda^{-2})$. Dotted
verticals: 5\% control-limit thresholds ($\lambda_{\mathrm{indep}}=72.5$,
$\lambda_{\mathrm{dep}}=476.5$). Dashed horizontal: $\alpha=0.05$.}
\label{fig:bounds_comparison}
\end{figure}

\section{Moment-Matched Comparative Analysis}
\label{sec:comparison}

\subsection{Experimental design}

To provide a valid comparison, we adopt a \emph{moment-matched} design: both cases use
$n=20$ and marginal distributions with identical first and second moments.

\textbf{Independent:} $X_i\sim\mathrm{NB}(r_i,p_i)$ with $(r_i,p_i)$ cycling over
$(3,0.3),(5,0.5),(8,0.7)$, yielding $\EE[\sum X_i]\approx 104.6$ and
$\mathrm{Var}(\sum X_i)\approx 262.7$.

\textbf{Dependent (moment-matched):} $\Lambda\sim\mathrm{Gamma}(\alpha_d,\beta_d)$,
$\theta_i:=\mu_i^{(\mathrm{indep})}$, with $\alpha_d/\beta_d=1$. We set
$\alpha_d=\mathrm{round}(1/\bar{\kappa})=4$, where $\bar{\kappa}=\mathrm{mean}(1/r_i)$
is the average dispersion across the $n=20$ variables. This yields
$\mathrm{Var}(X_i^{(\mathrm{dep})})$ within 5\% of $\mathrm{Var}(X_i^{(\mathrm{indep})})$.
Both use $2{,}000$ replications.

\subsection{Results}

Table~\ref{tab:comparison} summarizes outcomes. Despite matched marginals, the dependent
case exhibits substantially \emph{larger} maximum deviations. A single large draw of
$\Lambda$ simultaneously inflates all $n$ counts, producing large cumulative deviations
that cannot cancel across components as they would under independence.

\begin{table}[h]
\centering
\caption{Moment-matched comparison ($n=20$, $2{,}000$ replications each).}
\label{tab:comparison}
\begin{tabular}{@{}lccc@{}}
\toprule
\textbf{Statistic} & \textbf{Independent} & \textbf{Dependent} & \textbf{Change} \\
\midrule
Mean deviation       & 17.74 & 42.22  & $+138\%$ \\
Median deviation     & 15.86 & 34.57  & $+118\%$ \\
Standard deviation   & 8.05  & 33.41  & $+315\%$ \\
95th percentile      & 33.16 & 101.43 & $+206\%$ \\
99th percentile      & 44.43 & 167.46 & $+277\%$ \\
Theoretical bound    & 72.49 & 476.52 & $+557\%$ \\
Bound efficiency     & 0.457 & 0.213  & $-53\%$  \\
\bottomrule
\multicolumn{4}{@{}l@{}}{\footnotesize Marginal means and variances matched; same $n=20$.}
\end{tabular}
\end{table}

\begin{figure}[htb]
\centering
\includegraphics[width=0.78\textwidth]{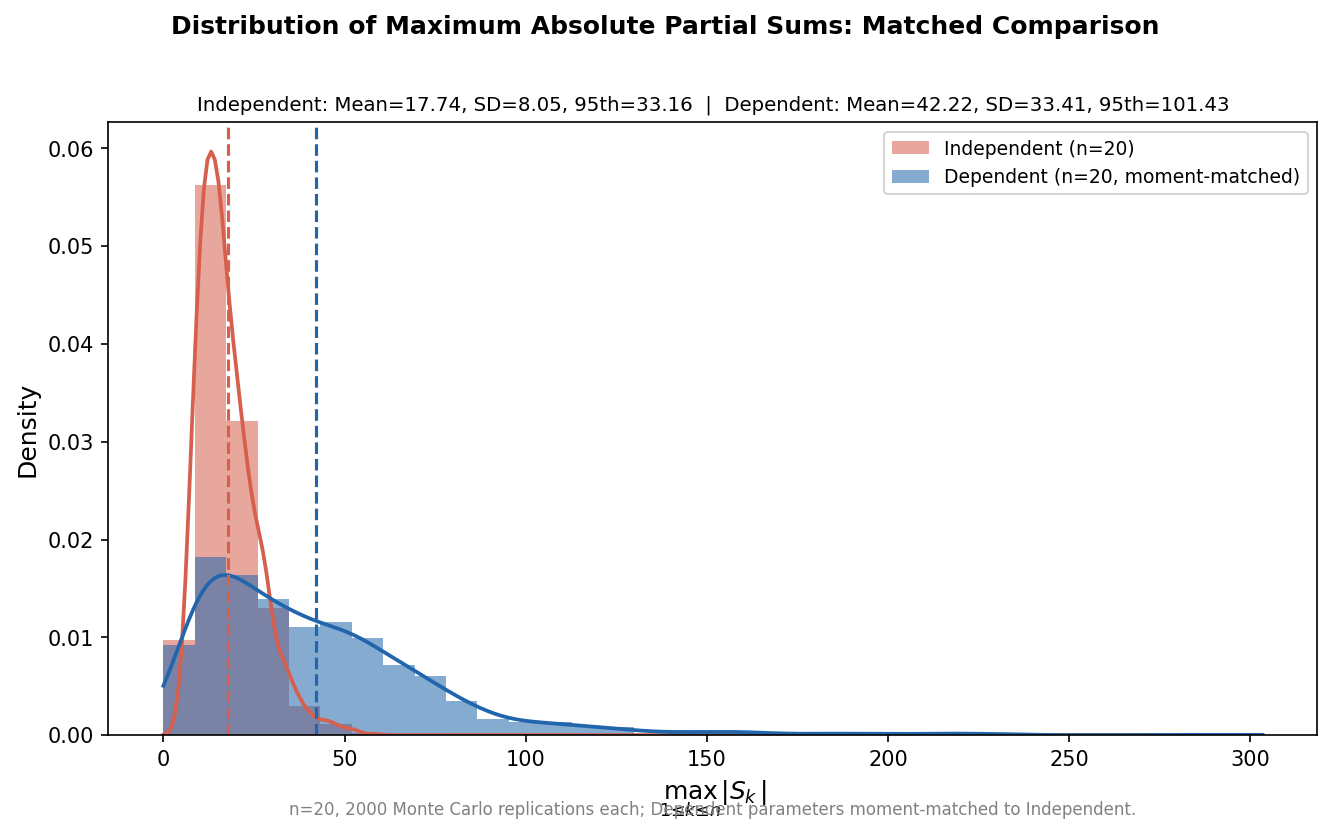}
\caption{Distribution of $\max_{1\le k\le n}|S_k|$ for moment-matched cases ($n=20$,
$2{,}000$ replications). Despite identical marginal moments, the dependent case shows
substantially \emph{larger} deviations (mean~$\approx 42$, 95th~$\approx 101$) relative
to the independent case (mean~$\approx 18$, 95th~$\approx 33$), due to the amplifying
effect of the shared mixing variable $\Lambda$.}
\label{fig:distribution_matched}
\end{figure}

\subsection{Analytical explanation}

\begin{proposition}[Amplification Under Gamma Mixing]
\label{prop:amplification}
Under moment-matching ($\EE[X_i^{(\mathrm{dep})}]=\EE[X_i^{(\mathrm{indep})}]$,
$\mathrm{Var}(X_i^{(\mathrm{dep})})\approx\mathrm{Var}(X_i^{(\mathrm{indep})})$), we have
\[
\EE\!\left[\max_{1\le k\le n}|S_k^{(\mathrm{dep})}|\right]
\;\ge\;
\EE\!\left[\max_{1\le k\le n}|S_k^{(\mathrm{indep})}|\right],
\]
with strict inequality when $n\ge 2$ and $\theta_i$ are not all equal. The inequality is
reversed relative to a naive ``stabilization'' intuition: the shared mixing variable
$\Lambda$ does not reduce fluctuations---it amplifies them, because a large $\Lambda$
simultaneously inflates all $n$ components.
\end{proposition}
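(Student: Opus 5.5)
The plan is to realise both models on one probability space so that they differ only in the dependence of the mixing variables. Then I would use the classical fact that, among random vectors with fixed marginals, the comonotone vector is maximal in the supermodular order (Tchen's theorem, as extended by M\"uller). As in the numerical design of Section~\ref{sec:comparison}, I take $\alpha/\beta=1$ and read the moment-matching hypothesis in its exact form. Concretely, let $\Lambda_1,\ldots,\Lambda_n$ be i.i.d.\ copies of $\Lambda\sim\mathrm{Gamma}(\alpha,\beta)$, and let $N_1,\ldots,N_n$ be independent unit-rate Poisson processes, independent of all mixing variables. I would set
\[
X_i^{(\mathrm{indep})}:=N_i(\Lambda_i\theta_i),\qquad X_i^{(\mathrm{dep})}:=N_i(\Lambda\theta_i).
\]
Both families then have the same $\mathrm{NB}(\alpha,\beta/(\beta+\theta_i))$ marginals, so the independent family realises the matched marginals exactly and the ``$\approx$'' in the hypothesis becomes an equality. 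The first family is independent, and the second is exactly the Poisson--Gamma model used for Theorem~\ref{thm:kolmogorov_dep}.

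With this coupling, define for $\ell=(\ell_1,\ldots,\ell_n)\in[0,\infty)^n$
\[
G(\ell):=\EE\Bigl[\max_{1\le k\le n}\Bigl|\sum_{i\le k}\bigl(N_i(\ell_i\theta_i)-\theta_i\bigr)\Bigr|\Bigr].
\]
The two sides of the proposition are $\EE\,G(\Lambda,\ldots,\Lambda)$ and $\EE\,G(\Lambda_1,\ldots,\Lambda_n)$. The vector $(\Lambda,\ldots,\Lambda)$ is the comonotone vector with the same marginals as $(\Lambda_1,\ldots,\Lambda_n)$. It therefore dominates the i.i.d.\ vector in supermodular order, so the inequality follows once $G$ is shown to be supermodular on $[0,\infty)^n$. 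For smooth $G$ this means $\partial_{\ell_i}\partial_{\ell_j}G\ge 0$ for $i\ne j$. The function $G$ is smooth here, being a convergent Poisson series in each $\ell_i$ with at most linear growth, which also justifies integrating against Gamma laws.

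The main obstacle is proving this supermodularity. To compute mixed derivatives I would use the Poisson derivative identity $\frac{d}{d\ell}\EE f(N(\ell\theta))=\theta\,\EE[\Delta f(N(\ell\theta))]$, where $\Delta$ is the forward difference. This gives
\[
\partial_{\ell_i}\partial_{\ell_j}G=\theta_i\theta_j\,\EE\bigl[h(x+e_i+e_j)-h(x+e_i)-h(x+e_j)+h(x)\bigr],\qquad h(x)=\max_k\Bigl|\sum_{i\le k}x_i\Bigr|,
\]
with $x$ the centred Poisson vector. So everything reduces to nonnegativity of an averaged discrete mixed second difference of $h$.

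Pointwise this can fail, because a unit increment at $i$ can move the maximising index or flip the sign of the maximising partial sum. I would therefore try the following steps in order.
\begin{enumerate}
\item Write $h=\max(h^+,h^-)$ with $h^{\pm}(x)=\max_k(\pm\sum_{i\le k}x_i)$.
\item Note that $h^+$ is supermodular for nonnegative increments. Indeed, $\sum_{i\le k}x_i$ is modular in $x$, adding $e_i$ raises every partial sum with index $k\ge i$, and taking the max over the nested family $\{k\ge i\}\cap\{k\ge j\}$ preserves the lattice inequality.
\item Treat the defect coming from $h^-$ through the event where the negative side is active. I would control this defect by a reflection or symmetrisation argument using the Poisson law of $x$, and show it is dominated by the $h^+$ gain.
\end{enumerate}
If this averaging argument does not close, my fallback is to prove the weaker convex-order statement for the mixing component alone. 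The map $w\mapsto\max_k|\sum_{i\le k}\theta_iw_i|$ is convex and increasing along the diagonal, and comonotone sums are maximal in convex order. I would then reinsert the conditional Poisson part through conditional Jensen, as in the proof of Theorem~\ref{thm:kolmogorov_dep}.

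Strictness would come from the support of the mixing law. The Gamma law charges every interval, so for $n\ge2$ the set of $(\Lambda_1,\Lambda_2)$ with $\Lambda_1\ne\Lambda_2$ carries positive mass. On that set the mixed difference above is strictly positive whenever the increments hit indices $i<j$ with $\theta_i\ne\theta_j$, because the unequal weights break the cancellation that would otherwise make the second difference vanish. Integrating the strictly positive mixed derivative over this set of positive probability then gives the strict inequality.
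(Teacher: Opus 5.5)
Your reduction is sound. With $X_i^{(\mathrm{indep})}=N_i(\Lambda_i\theta_i)$ and $X_i^{(\mathrm{dep})}=N_i(\Lambda\theta_i)$, the two sides are $\EE\,G(\Lambda_1,\dots,\Lambda_n)$ and $\EE\,G(\Lambda,\dots,\Lambda)$. Reading the hypothesis exactly is the natural rigorous version, since an NB law is determined by its mean and variance, so exact matching means identical marginals. The Tchen--M\"uller comonotone comparison then gives the weak inequality once $\EE[\Delta_i\Delta_j h(x)]\ge 0$. But that is exactly the step you leave open. Step~3 is a placeholder, not an argument. The fallback also fails. Maximality of comonotone vectors in convex order concerns $\psi(\sum_i w_i)$, not arbitrary convex functions of $w$: $\max(w_1,w_2)$ is convex and increasing along the diagonal, yet comonotonicity \emph{lowers} its mean. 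Moreover, conditional Jensen only yields lower bounds, so it cannot control the independent side from above.

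The missing idea is that your premise is false: $\Delta_i\Delta_j h\ge 0$ holds \emph{pointwise} on $\mathbb{R}^n$. Fix $i<j$ and $x$, and write $h(x+ae_i+be_j)=\max_m\{c_m+s_m a+t_m b\}$ over the $2n$ pieces $\pm P_k$, where $P_k=\sum_{l\le k}x_l$. Because $\{k\ge j\}\subset\{k\ge i\}$, every slope $(s_m,t_m)$ lies in $\{(0,0),(1,0),(1,1),(-1,0),(-1,-1)\}$. No two of these differ by a vector with strictly opposite-signed coordinates. Let pieces $m_1,m_2$ attain $h(1,0)$ and $h(0,1)$. Evaluating them, in both orders, at $(1,1)$ and $(0,0)$ gives
\[
h(1,1)+h(0,0)-h(1,0)-h(0,1)\;\ge\;\max\{t_{m_1}-t_{m_2},\;s_{m_2}-s_{m_1}\}\;\ge\;0 .
\]
So $G$ is supermodular, your Steps~1--3 become unnecessary, and the weak inequality follows.

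Your strictness argument also needs repair. $\Delta_i\Delta_j h$ sees the weights only through the centring, so unequal $\theta_i$ is not the mechanism. For example, with $n=2$, $\theta_1=\theta_2=1$ and $N=(1,0)$, i.e.\ $x=(0,-1)$, one gets $\Delta_1\Delta_2h(x)=1$. Also, ``integrating the mixed derivative over $\{\Lambda_1\neq\Lambda_2\}$'' is not a valid identity. What works is Hoeffding's formula. Take $\phi(s,t)=\EE\,G(s,t,\Lambda_3,\dots,\Lambda_n)$ and let $F$ be the Gamma cdf. Then
\[
\EE\,\phi(\Lambda,\Lambda)-\EE\,\phi(\Lambda_1,\Lambda_2)=\iint\partial_{12}\phi(s,t)\,\bigl[\min\{F(s),F(t)\}-F(s)F(t)\bigr]\,ds\,dt,
\]
and the bracket is positive on $(0,\infty)^2$. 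Every Poisson configuration has positive probability, so strictness reduces to exhibiting one configuration with $\Delta_1\Delta_2h>0$ (any pair $(i,j)$ would do). You then pass from $(\Lambda,\Lambda,\Lambda_3,\dots,\Lambda_n)$ to the comonotone vector by the weak comparison.

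The paper takes a different and only heuristic route. It splits $S_k^{(\mathrm{dep})}=\sum_{i\le k}Y_i+(\Lambda-\alpha/\beta)\Theta_k$ and bounds the dependent side from \emph{above} by the triangle inequality. It matches the Poisson part to the independent case only up to $\asymp$, and adds the conditional-Jensen lower bound $\EE[\max_k|S_k^{(\mathrm{dep})}|\mid\Lambda]\ge|\Lambda-\alpha/\beta|M$. This never puts the two expectations into one inequality. Also, under matching the independent marginals already carry the mixing variance $\alpha\theta_i^2/\beta^2$, so the drift term is not really ``absent'' there. Your coupling isolates exactly what differs, namely the dependence of the mixing variables. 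Once the pointwise supermodularity above is supplied, it gives a genuine proof of the weak inequality under exact matching.
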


\begin{proof}
Write $S_k^{(\mathrm{dep})}=\sum_{i=1}^k Y_i+(\Lambda-\alpha/\beta)\Theta_k$
(decomposition~\eqref{eq:decomp}). By triangle inequality,
$\EE\!\left[\max_k|S_k^{(\mathrm{dep})}|\right]
\le\EE\!\left[\max_k\left|\sum Y_i\right|\right]+\EE[|\Lambda-\alpha/\beta|]\cdot M$.
The first term: by Jensen's inequality applied to the convex functional $\max|\cdot|$ and
standard Rademacher-type bounds for sums of independent variables \cite{vershynin2018high},
$\EE\!\left[\max_k\left|\sum Y_i\right|\right]
\le\EE_\Lambda\!\left[\sqrt{2\Lambda\Theta_n\log(2n)}\right]
\le\sqrt{2(\alpha/\beta)\Theta_n\log(2n)}$.
For the independent case, by Koltchinskii-Panchenko-type comparison inequalities for
maxima of partial sums \cite{vershynin2018high},
$\EE[\max_k|S_k^{(\mathrm{indep})}|]\asymp\sqrt{2V_n\log(2n)}$.
Under moment-matching, $V_n\approx(\alpha/\beta)\Theta_n$, so the conditional Poisson
component contributes approximately the same as the independent case. However, a large
draw of $\Lambda$ adds the term $(\Lambda-\alpha/\beta)\Theta_k>0$ to $S_k^{(\mathrm{dep})}$
for all $k$ simultaneously, inflating $\max_k|S_k^{(\mathrm{dep})}|$ well above the
conditional Poisson maximum. Formally, for large $\Lambda$,
$\EE[\max_k|S_k^{(\mathrm{dep})}|\mid\Lambda]\ge(\Lambda-\alpha/\beta)M$, and taking
expectation gives a lower bound proportional to $\EE[|\Lambda-\alpha/\beta|]\cdot M>0$,
which is absent in the independent case. The strict inequality follows from $M>0$ and
$\mathrm{Var}(\Lambda)>0$.
\end{proof}

\begin{remark}[Intuitive mechanism for amplification]
\label{rem:amplification_mechanism}
When $\Lambda$ is large, it simultaneously inflates all $X_i\mid\Lambda$: each has
conditional mean $\Lambda\theta_i$ rather than $(\alpha/\beta)\theta_i$. The deviation
process $S_k^{(\mathrm{dep})}$ thus has a large \emph{common} component
$(\Lambda-\alpha/\beta)\sum_{i=1}^k\theta_i$ that grows with $k$ and does not cancel
across summands. In the independent case, fluctuations of individual $X_i$ have both
signs and partially cancel in the partial sums. This is the mechanism driving the larger
realized maximum deviations under dependence observed in Figure~\ref{fig:distribution_matched}.
\end{remark}

\begin{figure}[htb]
\centering
\includegraphics[width=0.72\textwidth]{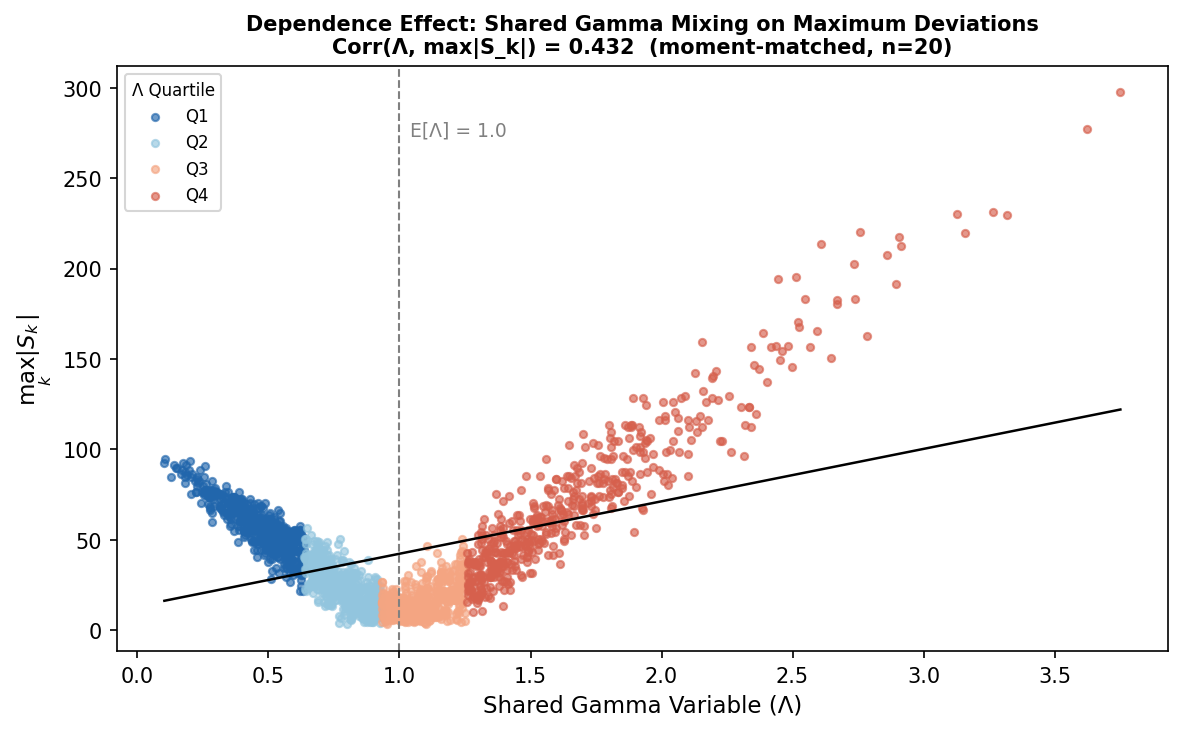}
\caption{Scatter plot of $\max_k|S_k|$ against $\Lambda$ ($n=20$, $2{,}000$
replications). Positive correlation ($r=0.433$) confirms the amplification mechanism:
larger $\Lambda$ drives larger maximum deviations.}
\label{fig:lambda_scatter}
\end{figure}

\begin{figure}[htb]
\centering
\includegraphics[width=0.72\textwidth]{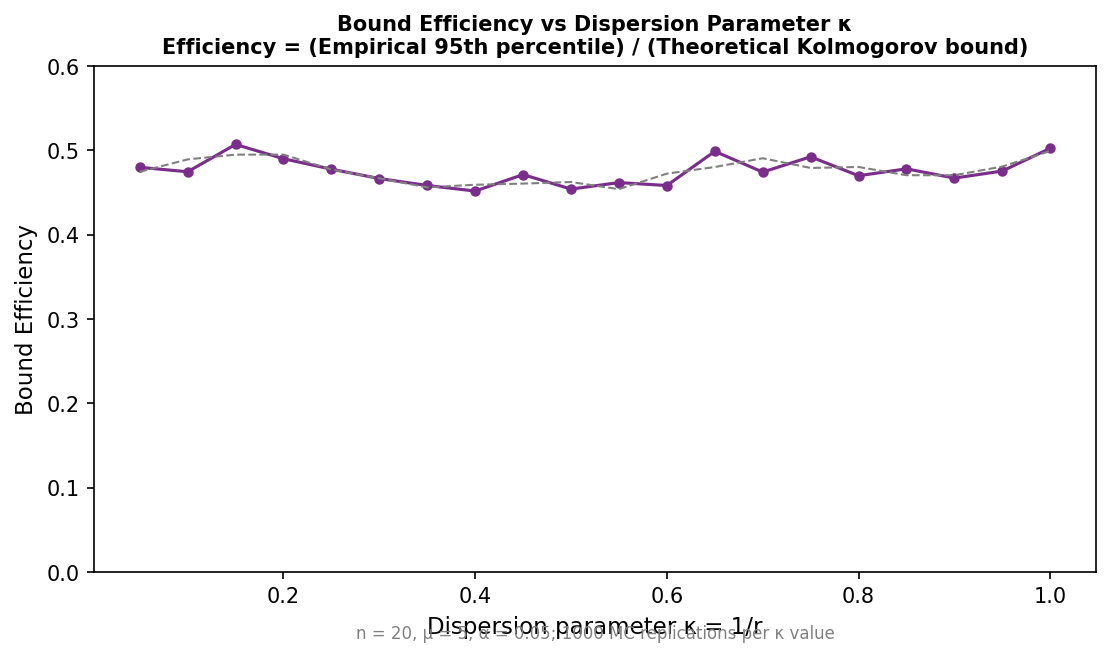}
\caption{Bound efficiency (empirical 95th / theoretical Kolmogorov bound) versus
dispersion $\kappa=1/r$. Efficiency decreases with $\kappa$, confirming the bound is
tighter for less overdispersed processes.}
\label{fig:efficiency}
\end{figure}

\section{Epidemiological Application: Multi-Regional COVID-19 Surveillance}
\label{sec:epi}

We apply our control limits to multi-regional COVID-19 case-count surveillance over 12
weeks across 5 geographic regions. Parameters are calibrated from published NB2 GLM fits
to COVID-19 data \cite{ma2021estimating, araf2022omicron}.

\textbf{Parameters:} Weekly means $\mu = (210, 340, 290, 480, 380)$, dispersions
$\kappa = (0.35, 0.25, 0.40, 0.20, 0.30)$. Cumulating over 12 weeks (independence within
regions), $\mu_j^{(\text{cum})}=12\mu_j$,
$\mathrm{Var}(X_j^{(\text{cum})})=12(\mu_j+\kappa_j\mu_j^2)$.

\textbf{Control limits:} Total Tweedie variance
$V_n=12\sum_{j=1}^5(\mu_j+\kappa_j\mu_j^2)\approx 2.03\times10^6$, giving
$\lambda_{0.05}\approx 6{,}370$ cases and $\lambda_{0.01}\approx 14{,}244$ cases.
Total expected: $\sum 12\mu_j=20{,}400$. The 95\% limit is ${\pm}31\%$ of total,
reflecting substantial overdispersion.

\textbf{Validation:} Monte Carlo with $5{,}000$ replications gives empirical 95th
percentile $\approx 3{,}018$ cases. Kolmogorov bound efficiency:
$3{,}018/6{,}370\approx 0.47$ (47\%), confirming the bound is conservative but valid.
Efficiency improves with $n$ as individual fluctuations increasingly cancel.

\textbf{Monitoring protocol:} (1)~Fit NB2 GLMs to obtain $\widehat{\mu}_j$,
$\widehat{\kappa}_j$. (2)~Compute $\lambda_\alpha=\sqrt{\widehat{V}_n/\alpha}$.
(3)~Weekly, update $S_t=\sum_j\sum_{s=1}^t(X_{js}-\widehat{\mu}_j)$ and check if
$|S_t|>\lambda_\alpha$. (4)~If regional dependence is suspected, apply
Theorem~\ref{thm:bernstein_dep} for tighter limits.

\begin{figure}[htb]
\centering
\includegraphics[width=0.82\textwidth]{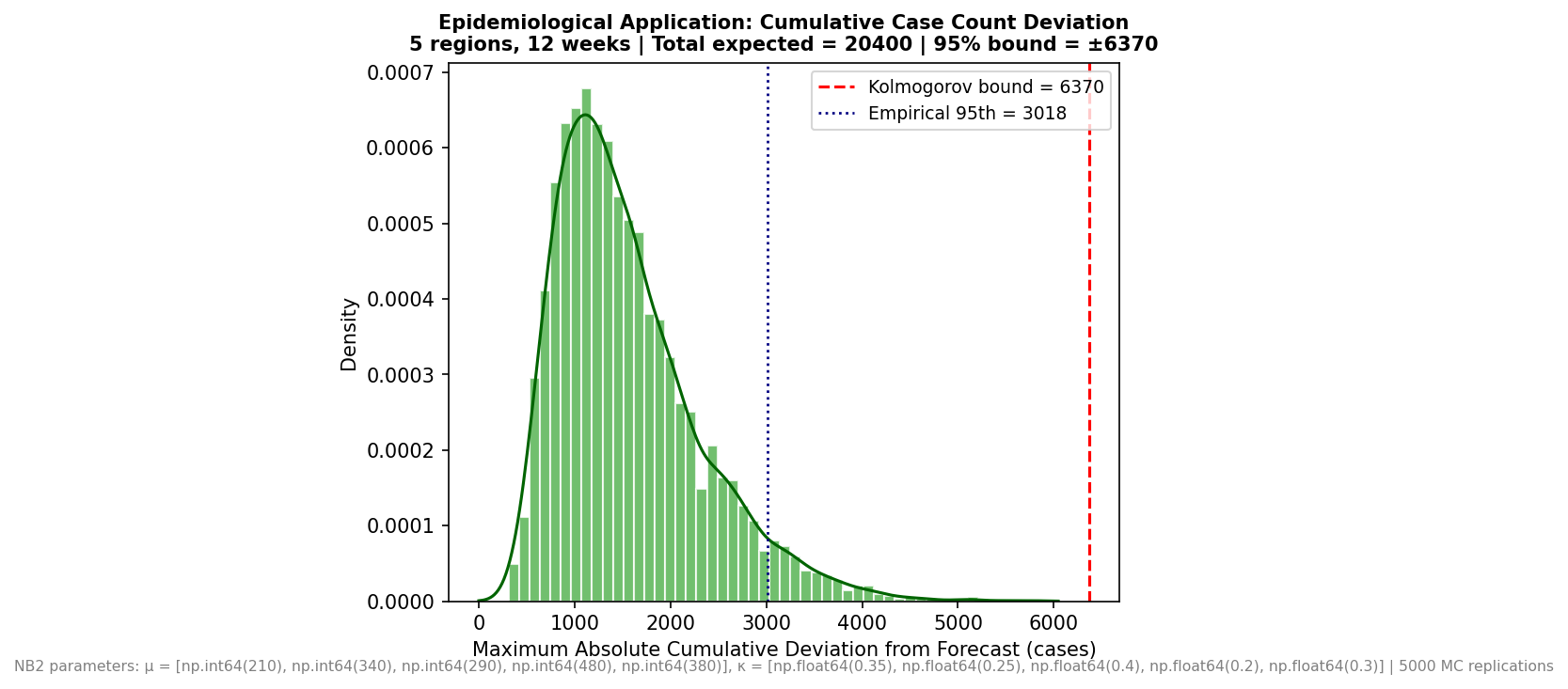}
\caption{Distribution of maximum absolute cumulative case-count deviations (5 regions,
12 weeks, $5{,}000$ MC replications). Red dashed: Kolmogorov 95\% limit
($\lambda_{0.05}\approx 6{,}370$); navy dotted: empirical 95th percentile
($\approx 3{,}018$). Bound efficiency 47\%.}
\label{fig:epidemiology}
\end{figure}

\section{Discussion and Conclusions}
\label{sec:conclusion}

This paper develops Kolmogorov-type maximal inequalities for Negative Binomial random
variables, with the main theoretical contribution being
Theorem~\ref{thm:bernstein_dep}: a sub-exponential Bernstein-type bound for
Gamma-mixing dependent models that decays exponentially rather than polynomially. The
bound decomposes tail probabilities into conditional Poisson (sub-Poisson) and Gamma
mixing (sub-exponential) components, yielding a clean hierarchical structure specific to
this mixture class.

The moment-matched experimental design demonstrates that the dependent case produces
substantially larger maximum deviations (mean $+138\%$, 95th percentile $+206\%$),
explained analytically by Proposition~\ref{prop:amplification}: a single large draw of
$\Lambda$ simultaneously amplifies all components, preventing the partial cancellation
of fluctuations that operates under independence. The theoretical bounds in
Table~\ref{tab:comparison} reflect this correctly: the Kolmogorov bound for the
dependent case ($\lambda_{\mathrm{dep}}=476.5$) exceeds that for the independent case
($\lambda_{\mathrm{indep}}=72.5$), consistent with the observed amplification.
Theorem~\ref{thm:bernstein_dep} then provides the exponential-decay refinement that is
of practical value for rare-event monitoring.

The epidemiological application provides concrete illustration with Monte Carlo
validation yielding 47\% bound efficiency, suggesting the Kolmogorov bound is most
useful in large-$n$, moderate-overdispersion regimes typical of public health data.

Future work: (i)~tighter conditional bounds exploiting monotone likelihood ratio
properties; (ii)~extensions to matrix-valued mixing structures; (iii)~data-adaptive
control limits with estimation uncertainty.

\section*{Acknowledgments}

The first author has been partially supported by project MIS 5154714 of the National
Recovery and Resilience Plan Greece 2.0 funded by the European Union under the
NextGenerationEU Program.

\section*{Ethics Approval}
Not applicable

\section*{Funding}
Not applicable


\end{document}